\documentclass[12pt]{amsart}
\usepackage{amssymb,amsmath,amscd,enumerate}
\usepackage{tikz}

\usepackage{capt-of}
\usepackage[colorlinks=true,linkcolor=blue,citecolor=blue]{hyperref}
\usepackage{graphicx}
\usepackage{hyperref}
\usepackage[english]{babel}

\usepackage{float} 
\usepackage{mathrsfs}
\usetikzlibrary{arrows}
 \numberwithin{equation}{section}
 \newtheorem{thm}{\bf Theorem}[section]
 \newtheorem{lem}[thm]{\bf Lemma}
 
 \newtheorem{cor}[thm]{\bf Corollary}
 \newtheorem{prop}[thm]{\bf Proposition}
 \newtheorem{prob}[thm]{\bf Problem}
 \theoremstyle{definition}
 \newtheorem{defn}[thm]{Definition}
 
 \newtheorem{exam}[thm]{Example}

 \newtheorem{quest}[thm]{Question}
\newtheorem*{thm*}{Theorem}
\usepackage{color}

\DeclareMathOperator{\rad}{rad}
\DeclareMathOperator{\ecc}{ecc}

\DeclareMathOperator{\diam}{diam}

\usepackage[normalem]{ulem}

\begin{document}
\title[Well-covered powers of graph]{On graphs whose powers are well-covered}
\date{}

\author[My Hanh Pham]{My Hanh Pham}
\address{Faculty of Education, An Giang University\\Vietnam National University Ho Chi Minh City\\ Long Xuyen, An Giang, Vietnam}
\email{pmhanh@agu.edu.vn}

\author[Thanh Vu]{Thanh Vu}
\address{Institute of Mathematics, VAST, 18 Hoang Quoc Viet, Hanoi, Vietnam}
\email{vuqthanh@gmail.com}

\let\thefootnote\relax\footnotetext{*Corresponding Author: Thanh Vu}
\keywords{well-covered, power of graph}

\dedicatory{Dedicated to Professor Le Tuan Hoa on the occasion of his 70th birthday}
\subjclass[2000]{Primary: 05C70}
 
\begin{abstract} 
In this paper, we investigate graphs with the property that all of their powers are well-covered. We obtain complete classifications for several graph classes, including chordal graphs, cochordal graphs, and very well-covered graphs.
 \end{abstract}

\maketitle

\section{Introduction}
  \label{sec1} 

Let $G$ be a simple graph. A subset $C \subseteq V(G)$ is called a \emph{vertex cover} of $G$ if $C$ intersects every edge of $G$. It is called a \emph{minimal vertex cover} if no proper subset of $C$ is a vertex cover. A graph $G$ is said to be \emph{well-covered} if all of its minimal vertex covers have the same cardinality. 

The concept of well-coveredness is a central topic in graph theory \cite{BM, P}. Structural characterizations of well-covered graphs have been established for several prominent graph classes, including chordal graphs \cite{PTV}, bipartite graphs \cite{R}, and graphs with $\text{girth}(G) \ge 5$ \cite{FHN}. On the other hand, graph powers constitute another fundamental notion in structural graph theory. For a positive integer $d$, the $d$-th power $G^d$ is defined on the vertex set $V(G^d) = V(G)$, where two distinct vertices $u, v \in V(G)$ form an edge $\{u, v\} \in E(G^d)$ if and only if they are connected by a path of length at most $d$ in $G$. In this work, we study the following classification problem:

\begin{prob}
Classify all simple graphs $G$ for which $G^d$ is well-covered for every $d \ge 1$.
\end{prob}

In the bipartite setting, Deniz \cite{D} showed that if both $G$ and $G^2$ are well-covered, then every connected component of $G$ is isomorphic to either $K_1$ or $K_{n,n}$. This suggests that requiring $G^d$ to be well-covered for all $d \ge 1$ imposes severe structural constraints on $G$. Since a graph $G$ is well-covered if and only if each of its connected components is well-covered, we may assume without loss of generality that $G$ is connected and non-trivial (i.e., has no isolated vertices). 

Our first main results classify such graphs within two important classes: chordal graphs and very well-covered graphs.

\begin{thm}\label{thm_chordal}
Let $G$ be a connected chordal graph with no isolated vertices. Then the following conditions are equivalent:
\begin{enumerate}
    \item $G^d$ is well-covered for all $d \ge 1$;
    \item $G$ and $G^2$ are well-covered;
    \item $G$ is a complete graph $K_n$ for some $n \ge 2$.
\end{enumerate}
\end{thm}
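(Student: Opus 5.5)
The implications (3) $\Rightarrow$ (1) and (1) $\Rightarrow$ (2) are immediate. If $G = K_n$, then $G^d = K_n$ for every $d$, and in $K_n$ every minimal vertex cover has size $n-1$. So the substance of the proof is (2) $\Rightarrow$ (3): a connected chordal graph $G$ with $G$ and $G^2$ both well-covered must be complete.

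Two facts drive the argument.

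\emph{Powers preserve chordality.} Odd powers of chordal graphs are chordal, and $G^2$ of a chordal graph need not be chordal in general. So I will not rely on chordality of $G^2$. Instead I will work with independent sets of $G^2$ directly. These are exactly the sets of vertices of $G$ at pairwise distance at least $3$, and $G^2$ is well-covered iff all maximal such sets have the same size.

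\emph{Structure of well-covered chordal graphs.} By Prisner--Topp--Vestergaard \cite{PTV}, a chordal graph is well-covered iff every vertex lies in exactly one simplex. Here a simplex is a clique that is the closed neighborhood of some vertex, and the simplices partition $V(G)$. Let $S_1,\dots,S_k$ be these simplices, with simplicial vertices $v_i \in S_i$ satisfying $N[v_i]=S_i$. Then $\alpha(G)=k$.

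Suppose $G$ is not complete. Then $k\ge 2$, since $k=1$ would give $V(G)=S_1$, a clique. The plan is to exhibit two maximal independent sets of $G^2$ of different sizes.

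\emph{The first set.} Distinct $v_i,v_j$ are nonadjacent, since $v_j\notin S_i=N[v_i]$ by disjointness. The $v_i$ have distance at least $2$, but they may be at distance $2$. Consider instead a maximal $2$-packing built greedily. Since $G$ is connected and $k\ge2$, some vertex $u\in S_1$ has a neighbor outside $S_1$. The set $N[u]$ meets at least two simplices. I expect the cleanest route is to take a vertex $w$ whose ball of radius $1$ meets as many simplices as possible, at least two. Extend $\{w\}$ to a maximal independent set $I$ of $G^2$.

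\emph{The second set.} Compare $I$ with a maximal independent set $J$ of $G^2$ built from simplicial vertices only. Each vertex of $G^2$ at distance at most $2$ from $v_i$ lies in $S_i$ or in some simplex adjacent to $S_i$.

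\emph{A smaller first case.} A likely easier path is to handle the case where $G$ has a path $v_1 - a - b - v_2$ with $a\in S_1$ and $b\in S_2$ adjacent. Concretely, first treat the smallest case: if $G$ is not complete, there is an induced $P_3$, namely $x - y - z$ with $x,z$ nonadjacent. Localizing, I would try to show that $\{y\}$ and $\{v_i,v_j\}$ can be extended inside the same region to maximal $G^2$-independent sets whose sizes differ by one. This would mimic Deniz's bipartite argument, using the partition into simplices so that the rest of the graph is handled identically in both extensions.

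\emph{The main obstacle.} The hard step is this "rest of the graph handled identically" step. I need to choose $y$ as a vertex whose $2$-ball contains the $2$-balls' worth of two simplicial vertices $v_i,v_j$ at distance $\ge3$. Equivalently, I need $N^2[y]\supseteq$ the relevant region, so that any maximal extension of $\{v_i,v_j\}$ outside $N^2[v_i]\cup N^2[v_j]$ is also valid for $y$.

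To get this, I would pick a longest shortest path, of length $\operatorname{diam}(G)\ge2$. If the diameter is $2$, then $G^2$ is complete and hence well-covered, so I must instead use $G$'s well-coveredness via simplices together with chordality to derive a contradiction. The diameter-$2$ chordal case is where I expect the real difficulty. There I would exploit that two simplicial vertices $v_1,v_2$ have a common neighbor, and try to show that this forces $G$ to be a single clique or to violate the unique-simplex condition.
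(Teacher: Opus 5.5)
\textbf{There is a genuine gap.} Your proposal does not reach a proof, and the obstacle you identify comes from a false claim. You write that the simplicial vertices $v_i, v_j$ ``may be at distance $2$''. This cannot happen. Since $N_G[v_i]=S_i$ and $N_G[v_j]=S_j$ are disjoint, $v_i$ and $v_j$ have no common neighbor, so $d_G(v_i,v_j)\ge 3$. This is the missing idea, and it settles everything:
\begin{enumerate}
\item $\{v_1,\dots,v_k\}$ is independent in $G^2$.
\item It is maximal in $G^2$, because $\bigcup_i N_G[v_i]=V(G)$.
\item Each $S_i$ is a clique of $G$, hence of $G^2$, so every independent set of $G^2$ meets each $S_i$ in at most one vertex. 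Since the $S_i$ partition $V(G)$, this gives $\alpha(G^2)=k$.
\end{enumerate}
Your own ``first set'' then finishes the argument. By connectivity and $k\ge 2$ there is $w\in S_a$ with a neighbor in some $S_b$, $b\ne a$. Then $S_a\cup S_b\subseteq N_{G^2}[w]$. So any maximal independent set $I$ of $G^2$ containing $w$ contains no other vertex of $S_a\cup S_b$, and at most one vertex from each of the remaining $k-2$ simplices. Hence $|I|\le 1+(k-2)=k-1<k$, contradicting well-coveredness of $G^2$. You never state this counting bound over the simplex partition. It is exactly what replaces your ``rest of the graph handled identically'' step, so no localization, Deniz-type extension argument, or diameter analysis is needed.

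In particular, the diameter-$2$ case you flag as ``the real difficulty'' is vacuous. The tool you propose for it, that two simplicial vertices have a common neighbor, is exactly what disjointness of the simplices forbids. A connected, non-complete, well-covered chordal graph has $k\ge 2$ simplicial vertices pairwise at distance $\ge 3$, hence diameter at least $3$.

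Your alternative route also needs more work than you suggest. It asks for a vertex $y$ with $B_2(y)\supseteq B_2(v_i)\cup B_2(v_j)$, the criterion of Lemma~\ref{lem_d2}. This is not automatic. For $w\in S_a$ adjacent to $S_b$ one does get $B_2(w)\supseteq N_G[S_a]=B_2(v_a)$. But $B_2(v_b)=N_G[S_b]$ can contain vertices at distance $3$ from $w$. For example, take $S_a=\{v_a,w\}$ and $S_b=\{v_b,x,b'\}$ with $w\sim x$, and $b'\sim c$ where $S_c=\{v_c,c\}$; then $c\in B_2(v_b)$ but $d_G(w,c)=3$. So you would still owe a careful choice of $y$, whereas the global count over the simplex partition avoids this entirely.
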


\begin{thm}\label{thm_very_well}
Let $G$ be a connected, very well-covered graph with no isolated vertices. Then the following conditions are equivalent:
\begin{enumerate}
    \item $G^d$ is well-covered for all $d \ge 1$;
    \item $G$ and $G^2$ are well-covered;
    \item $G$ is isomorphic to $K_{n,n}$ for some $n \ge 1$.
\end{enumerate}
\end{thm}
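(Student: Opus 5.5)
The implications (3)$\Rightarrow$(1) and (1)$\Rightarrow$(2) are straightforward, so the substance is (2)$\Rightarrow$(3). For (3)$\Rightarrow$(1): $K_{n,n}$ is well-covered, since its only maximal independent sets are its two sides, each of size $n$. For every $d\ge 2$, any two vertices of $K_{n,n}$ are joined by a path of length at most $2$, so $K_{n,n}^d=K_{2n}$, which is trivially well-covered. The implication (1)$\Rightarrow$(2) holds by specialising to $d=1,2$.

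For (2)$\Rightarrow$(3) I would use the standard structure theorem for very well-covered graphs without isolated vertices (Favaron). It says $G$ has a perfect matching $M=\{x_1y_1,\dots,x_ny_n\}$ with two properties:
\begin{enumerate}
\item no edge of $M$ lies in a triangle;
\item whenever $zx_i$ and $wy_i$ are edges with $z\ne y_i$ and $w\ne x_i$, the vertices $z$ and $w$ are adjacent.
\end{enumerate}
Here $|V(G)|=2n$ and the independence number of $G$ is $n$. The plan has two steps:
\begin{enumerate}
\item show that the hypothesis on $G^2$ forces $G^2$ to be complete, i.e.\ $\diam(G)\le 2$;
\item show that a connected very well-covered graph of diameter at most $2$ is $K_{n,n}$.
\end{enumerate}

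\emph{Step 1.} Independent sets of $G^2$ are exactly the sets of vertices at pairwise $G$-distance at least $3$. The graph $G^2$ has a maximal independent set of size $1$ precisely when some vertex $w$ has $\ecc(w)\le 2$. So if $\diam(G)\ge 3$ while some vertex has eccentricity at most $2$, then $G^2$ has maximal independent sets of sizes $1$ and $\ge 2$, which is a contradiction. It therefore suffices to produce a vertex with $\ecc\le 2$ whenever $\diam(G)\ge 3$, or to directly exhibit two maximal $G^2$-independent sets of different sizes.

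I would try the second route using a matching edge $x_1y_1$ lying on a shortest path between two vertices $u,v$ with $d(u,v)\ge 3$. Property (2) says that $N(x_1)\setminus\{y_1\}$ is completely joined to $N(y_1)\setminus\{x_1\}$. Hence $N[x_1]\cup N[y_1]$ has diameter at most $2$ and behaves locally like a complete bipartite block. I would build one maximal $G^2$-independent set containing $x_1$, and another obtained from it by replacing $x_1$ with two vertices lying on opposite sides of the block at distance $3$. This would give sets of different sizes. I expect this step to be the main obstacle: controlling the distances after the swap requires care and may need an induction on $n$. The induction would delete $N[x_1]\cup N[y_1]$, which preserves very well-coveredness via property (2).

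\emph{Step 2.} Assume $\diam(G)\le 2$. Let $X$ be a maximal independent set of $G$ containing $x_1$; since $G$ is very well-covered, $|X|=n$. I would show that $X$ contains exactly one endpoint of each matching edge and that $Y=V(G)\setminus X$ is also independent. An edge inside $Y$ together with the diameter-$2$ condition would produce either a triangle through a matching edge, contradicting property (1), or a violation of property (2). Once $G$ is bipartite with parts $X$ and $Y$, property (2) combined with $\diam(G)\le 2$ forces every $x_i$ to be adjacent to every $y_j$, so $G\cong K_{n,n}$. Alternatively, once $G$ is known to be bipartite, one can finish immediately by Deniz's theorem, since $G$ is connected and both $G$ and $G^2$ are well-covered.
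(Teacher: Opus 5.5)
\textbf{There is a genuine gap: Step 1 is not proved, and it carries the whole weight of (2)$\Rightarrow$(3).} Your implications (3)$\Rightarrow$(1) and (1)$\Rightarrow$(2) are fine. Your Step 2 is also sound and can be completed as you indicate. After relabelling so that $X=\{x_1,\dots,x_n\}$, suppose $y_iy_j$ is an edge. Take a common neighbour $y_k$ of $x_i,x_j$. If $k\in\{i,j\}$ you get a triangle on a matching edge directly. Otherwise property (2) on $x_iy_i$ gives $y_ky_j\in E$, hence the triangle $x_jy_jy_k$. So $G$ is bipartite, and diameter $2$ then forces $K_{n,n}$.

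For Step 1 you only describe a hoped-for swap and flag it as the main obstacle. As stated the swap is not even well defined. You observe yourself that $N[x_1]\cup N[y_1]$ has diameter at most $2$, so it contains no two vertices at distance $3$. More importantly, for an arbitrary matching edge on a shortest path, nothing guarantees that deleting one vertex and inserting two keeps a set independent in $G^2$. Such a swap needs a triple $c,s,t$ with $d_G(s,t)\ge 3$ and $B_2(s)\cup B_2(t)\subseteq B_2(c)$ (Lemma \ref{lem_d2}), and your setup does not produce one.

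The fallback induction also breaks. Deleting $N[x_1]\cup N[y_1]$ need not preserve very well-coveredness. In $K_3\circ K_1$ (a triangle $a,b,c$ with pendant vertices $a',b',c'$), removing $N[a]\cup N[a']$ leaves the two isolated vertices $b',c'$. Nor is well-coveredness of the square obviously inherited by an induced subgraph, since distances can grow.

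Your eccentricity route cannot come from very well-coveredness and $\diam(G)\ge 3$ alone either. $P_4\circ K_1$ is very well-covered, yet every vertex has eccentricity at least $3$.

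\textbf{The missing idea is to anchor the argument at a vertex $u$ of minimum degree.} Put $S=N_G(u)$ and $T=N_G(u')$, where $u'$ is the matching partner of $u$.
\begin{enumerate}
\item Favaron's condition (2) together with minimality of $\deg u$ forces $N_G(x')=S$ and $N_G(x)=T$ for every $x\in S$. It also gives $N_G(w')\subseteq T$ for every $w\in T$.
\item If some $w\in T$ has $w'\notin S$, then $(u,u',w,w')$ is an induced path, so $d_G(u,w')=3$. Moreover $B_2(u)\cup B_2(w')\subseteq B_2(u')$. Replacing $u'$ by $u$ and $w'$ in any maximal independent set of $G^2$ containing $u'$ therefore enlarges it, which is a contradiction.
\item Otherwise $w'\in S$ for every $w\in T$. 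Condition (2) applied to $\{w,w'\}$ then shows that any neighbour of $w$ outside $S$ would be adjacent to $u$, which is impossible. Hence $N_G(w)=S$ for all $w\in T$, so $G=K_{|S|,|T|}$, and very well-coveredness gives $|S|=|T|$.
\end{enumerate}
This never needs to pass through $\diam(G)\le 2$ as a separate step.
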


Recall that for a connected graph $G$, the distance $d_G(u, v)$ between two vertices $u, v \in V(G)$ is the length of a shortest path connecting them. The \emph{diameter} of $G$, denoted by $\operatorname{diam}(G)$, is defined as
\[
\operatorname{diam}(G) = \max \{ d_G(u, v) \mid u, v \in V(G) \}.
\]
Notice that if $\operatorname{diam}(G) \le 2$, then $G^2$ (and indeed $G^d$ for all $d \ge 2$) is a complete graph $K_n$, which is trivially well-covered. Our next main result demonstrates that for cochordal graphs, this diameter condition completely characterizes when all powers remain well-covered.

\begin{thm}\label{thm_cochor}
Let $G$ be a connected cochordal graph with no isolated vertices. Then the following conditions are equivalent:
\begin{enumerate}
    \item $G^d$ is well-covered for all $d \ge 1$;
    \item $G$ and $G^2$ are well-covered;
    \item $G$ is well-covered and $\operatorname{diam}(G) \le 2$.
\end{enumerate}
\end{thm}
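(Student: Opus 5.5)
The implications (1)$\Rightarrow$(2) and (3)$\Rightarrow$(1) are immediate. For (1)$\Rightarrow$(2), take $d=1,2$. For (3)$\Rightarrow$(1), if $\operatorname{diam}(G)\le 2$ then $G^d$ is complete for every $d\ge 2$, hence well-covered, and $G^1=G$ is well-covered by assumption. So the real content is (2)$\Rightarrow$(3). Here we must show that a connected cochordal graph $G$ with $G^2$ well-covered has $\operatorname{diam}(G)\le 2$. I expect the hypothesis that $G$ itself is well-covered not to be needed for this direction.

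The translation I will use is as follows. A set $S$ is independent in $G^2$ iff its vertices are pairwise at distance $\ge 3$ in $G$. Hence $G^2$ is well-covered iff all maximal such ``$2$-packings'' have the same size. Suppose for contradiction that $\operatorname{diam}(G)\ge 3$, and pick $u,v$ with $d_G(u,v)\ge 3$. Then $\{u,v\}$ is independent in $G^2$ and extends to a maximal independent set of $G^2$ of size $\ge 2$. It therefore suffices to produce a vertex $w$ with $d_G(w,x)\le 2$ for all $x\in V(G)$, i.e.\ a vertex of eccentricity $\le 2$. Then $\{w\}$ is a maximal independent set of $G^2$ of size $1$, contradicting well-coveredness of $G^2$.

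To find $w$, use the cochordal hypothesis. Since $\overline{G}$ is chordal, it has no induced $C_4$, so $G$ has no induced $2K_2=\overline{C_4}$. In particular $G$ is $P_5$-free, since $P_5$ contains an induced $2K_2$. By the Bacs\'o--Tuza theorem, every connected $P_5$-free graph has a dominating clique or a dominating induced $P_3$. If $Q$ is a dominating clique, any $w\in Q$ works: every vertex lies in $Q$ or has a neighbor in $Q$, which is at distance $\le1$ from $w$. If $a\!-\!b\!-\!c$ is a dominating $P_3$, take $w=b$.

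The main obstacle is this eccentricity step. To keep the argument self-contained, I would try a direct proof for $2K_2$-free graphs. Take $w$ of maximum degree and suppose some $x$ has $d(w,x)=3$, with shortest path $w\!-\!p\!-\!q\!-\!x$. For any neighbor $y\neq p$ of $w$, the edges $wy$ and $qx$ must not form an induced $2K_2$. Since $w\not\sim q$ and $w\not\sim x$, this forces $y\sim q$ or $y\sim x$; the latter is impossible by distance, so $y\sim q$. Hence $N(w)\subseteq N(q)$, and $q\sim x$ with $x\notin N(w)$, so $\deg q>\deg w$. This contradicts maximality. Distances $\ge4$ are excluded similarly by applying $2K_2$-freeness to the first and last edges of a shortest path. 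This gives $\operatorname{ecc}(w)\le2$ and completes (2)$\Rightarrow$(3).
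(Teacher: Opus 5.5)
Your proof is correct. Its outer skeleton matches the paper's: the paper derives (2)$\Rightarrow$(3) by combining Lemma~\ref{lem_rad_cochor}, which says a connected cochordal graph has a vertex of eccentricity at most $2$, with the argument of Lemma~\ref{lem_ecc}. That argument says $\{w\}$ is then a maximal independent set of $G^2$, while a pair at distance $\ge 3$ extends to one of size $\ge 2$. This is exactly your reduction.

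Where you differ is in how you produce the central vertex. The paper takes a simplicial vertex $s$ of the chordal graph $\overline{G}$. Then $s$ is $G$-adjacent to everything outside the clique $Q=N_{\overline{G}}[s]$, and each $t\in Q\setminus\{s\}$ must have a $G$-neighbor outside $Q$, since otherwise $t$ would be isolated.

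You use only the consequence that $G$ has no induced $2K_2=\overline{C_4}$, and you show that any vertex of maximum degree has eccentricity at most $2$. Your argument is sound: every $y\in N(w)$ must be adjacent to $q$, so $N(q)\supseteq N(w)\cup\{x\}$. Distances $\ge 4$ are indeed excluded by the two end edges of a shortest path.

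Your route gives a stronger statement. The equivalence holds for all connected $2K_2$-free graphs, i.e.\ complements of $C_4$-free graphs such as $C_5$, not only cochordal ones. Your self-contained max-degree argument also makes the appeal to Bacs\'o--Tuza unnecessary.

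Running the eccentricity argument directly in $G^2$ also avoids a small mismatch in the paper. The paper invokes Lemma~\ref{lem_ecc} as stated for $G^{D-1}$, although only $G^2$ is assumed well-covered. This is harmless, since that lemma's proof works verbatim for $G^2$.

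The paper's route, in return, uses the chordal structure directly and names a specific center, the simplicial vertex of $\overline{G}$, with a two-line check.
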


Furthermore, we demonstrate that this property is closed under clique blow-ups (clique substitutions). Recall the definition of graph composition:

\begin{defn}
Let $G$ be a simple graph with vertex set $V(G) = \{1, \ldots, n\}$, and let $H_1, \ldots, H_n$ be graphs. The \emph{composition graph} $G[H_1, \ldots, H_n]$ is obtained by replacing each vertex $i \in V(G)$ with the graph $H_i$, and joining every vertex of $H_i$ to every vertex of $H_j$ whenever $\{i, j\} \in E(G)$.
\end{defn}

We show that replacing vertices with complete graphs preserves the property that all powers remain well-covered:

\begin{thm}\label{thm_clique_blowup}
Let $G$ be a simple graph with $V(G) = \{v_1, \ldots, v_n\}$. For any positive integers $a_1, \ldots, a_n$, let $H = G[K_{a_1}, \ldots, K_{a_n}]$ be the clique blow-up of $G$. Then $G^d$ is well-covered for all $d \ge 1$ if and only if $H^d$ is well-covered for all $d \ge 1$.
\end{thm}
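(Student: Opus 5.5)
The plan is to show that, for each fixed $d\ge 1$, the power $H^d$ is itself a clique blow-up of $G^d$, namely $H^d = G^d[K_{a_1},\ldots,K_{a_n}]$. Then I reduce the statement to a single lemma: a clique blow-up of a graph $F$ is well-covered if and only if $F$ is well-covered. Applying this lemma with $F=G^d$ for every $d$ gives the equivalence at once, and in fact gives it degree by degree.

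\emph{Step 1 (powers commute with clique blow-ups).} Write $V(H)=\bigsqcup_i B_i$, where $B_i$ is the clique replacing $v_i$, and let $\pi\colon V(H)\to V(G)$ be the projection. Take distinct $x\in B_i$ and $y\in B_j$. If $i=j$, then $x$ and $y$ are adjacent in $H$, hence in $H^d$. If $i\neq j$, then a walk $v_i=u_0,u_1,\ldots,u_k=v_j$ in $G$ with $k\le d$ lifts to a walk of the same length in $H$ from $x$ to $y$: pick any vertex in each intermediate block, and use that consecutive distinct blocks of adjacent vertices are completely joined. Conversely, a path in $H$ from $x$ to $y$ projects under $\pi$ to a walk in $G$ after deleting repeated consecutive vertices, and this walk has length at most the original one. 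Hence $d_H(x,y)=d_G(v_i,v_j)$ for $i\ne j$. This shows $H^d=G^d[K_{a_1},\ldots,K_{a_n}]$.

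\emph{Step 2 (well-coveredness is invariant under clique blow-ups).} I work with maximal independent sets: $F$ is well-covered iff all maximal independent sets have equal size, since minimal vertex covers are exactly their complements. In $F'=F[K_{a_1},\ldots,K_{a_n}]$, any independent set meets each block $B_i$ in at most one vertex, because $B_i$ is a clique. Its projection is independent in $F$, because vertices in distinct blocks are adjacent iff the corresponding vertices of $F$ are. Conversely, any transversal choice of one vertex from each block $B_i$ with $i\in S$, where $S$ is independent in $F$, is independent in $F'$. I then check that maximality is preserved in both directions. If $S$ is maximal in $F$, then any vertex $z\in B_k$ is either in the same block as a chosen vertex, or else $v_k$ is adjacent to some $v_i$ with $i\in S$, in which case $z$ is adjacent to the chosen vertex of $B_i$. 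The reverse direction is similar. So the maximal independent sets of $F'$ are exactly the lifts of the maximal independent sets of $F$, with the same cardinalities, and $F'$ is well-covered iff $F$ is.

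\emph{Main point of care.} The only delicate step is the distance equality in Step 1. Specifically, the projection of a path in $H$ may revisit a block, and consecutive vertices on the path may lie in the same block. Collapsing such repeats only shortens the walk, so the inequality $d_G\le d_H$ still holds. The reverse inequality $d_H\le d_G$ requires $i\ne j$, which is why that case is separated out. Everything else is routine bookkeeping.
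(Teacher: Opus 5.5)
Your proposal is correct and takes essentially the same approach as the paper. The paper's Claim 1 is your distance equality, which it also uses to show that $H^d$ is the clique blow-up of $G^d$, and its Claim 2 is your Step 2 with $F = G^d$; stating Step 2 as a standalone lemma and explicitly collapsing repeated consecutive blocks when projecting a path are only cosmetic refinements.
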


The paper is organized as follows. In Section~\ref{sec_prelim}, we collect necessary notation, definitions, and preliminary results on well-covered graphs and graph powers, and establish several basic properties of graphs whose powers are all well-covered. Section~\ref{sec_mains} is devoted to proving our main classification results for chordal, very well-covered, and cochordal graphs, as well as establishing stability under clique blow-ups.

\section{Graphs, Powers, and Well-coveredness}\label{sec_prelim}

\subsection{Graphs}

\begin{defn}
Let $G = (V(G), E(G))$ be a simple graph on the vertex set $V(G) = \{1, \ldots, n\}$.
\begin{enumerate}
    \item The \emph{neighborhood} of a vertex $x \in V(G)$ is $N_G(x) = \{y \in V(G) \mid \{x, y\} \in E(G)\}$. The \emph{closed neighborhood} of $x$ is $N_G[x] = N_G(x) \cup \{x\}$.
    \item For a subset $U \subseteq V(G)$, the \emph{neighborhood} and \emph{closed neighborhood} of $U$ in $G$ are defined respectively by 
    \[
    N_G(U) = \bigcup_{x \in U} N_G(x) \quad \text{and} \quad N_G[U] = \bigcup_{x \in U} N_G[x].
    \]
    \item A graph $H$ is a \emph{subgraph} of $G$ if $V(H) \subseteq V(G)$ and $E(H) \subseteq E(G)$. It is an \emph{induced subgraph} of $G$ if $E(H) = \{\{u, v\} \in E(G) \mid u, v \in V(H)\}$. For a subset $U \subseteq V(G)$, $G[U]$ denotes the induced subgraph of $G$ on $U$, and $G \setminus U$ denotes $G[V(G) \setminus U]$.
    \item A \emph{path} $P_k$ on $k$ vertices has vertex set $V(P_k) = \{v_1, \ldots, v_k\}$ and edge set $E(P_k) = \{\{v_1, v_2\}, \{v_2, v_3\}, \ldots, \{v_{k-1}, v_k\}\}$. A \emph{cycle} $C_k$ on $k$ vertices ($k \ge 3$) has edge set $E(C_k) = E(P_k) \cup \{\{v_1, v_k\}\}$.
    \item A graph $G$ is \emph{chordal} if it contains no induced cycle of length four or greater. A graph $G$ is \emph{cochordal} if its complement graph $\overline{G}$ is chordal.
\end{enumerate}
\end{defn}

\begin{defn}[Distance and Graph Powers]
Let $G$ be a connected graph. The \emph{distance} $d_G(u, v)$ between two vertices $u, v \in V(G)$ is the length of a shortest path in $G$ connecting them. The \emph{diameter} of $G$ is $\operatorname{diam}(G) = \max\{d_G(u, v) \mid u, v \in V(G)\}$.

For a positive integer $d \ge 1$, the \emph{$d$-th power} of $G$, denoted by $G^d$, is the graph defined on the vertex set $V(G^d) = V(G)$ with edge set
\[
E(G^d) = \{\{u, v\} \subseteq V(G) \mid u \neq v \text{ and } d_G(u, v) \le d\}.
\]
\end{defn}

\begin{defn}[Independent Sets, Vertex Covers, and Well-Coveredness]
Let $G$ be a simple graph.
\begin{enumerate}
    \item A subset $I \subseteq V(G)$ is an \emph{independent set} of $G$ if no two vertices in $I$ are adjacent in $G$. An independent set $I$ is \emph{maximal} if it is not properly contained in any other independent set of $G$. The \emph{independence number} of $G$, denoted by $\alpha(G)$, is the cardinality of a maximum independent set of $G$.
    \item A subset $C \subseteq V(G)$ is a \emph{vertex cover} of $G$ if $C \cap e \neq \emptyset$ for every $e \in E(G)$. A vertex cover $C$ is \emph{minimal} if no proper subset of $C$ is a vertex cover of $G$. Note that $I \subseteq V(G)$ is a maximal independent set of $G$ if and only if $V(G) \setminus I$ is a minimal vertex cover of $G$.
    \item A graph $G$ is \emph{well-covered} if every minimal vertex cover of $G$ has the same cardinality.
\end{enumerate}
\end{defn}

\begin{defn} 
Let $G$ be a simple graph and $v$ a vertex of $G$. The \emph{eccentricity} of $v$ is defined by $\ecc_G(v) = \max \{d_G(u,v) \mid u \in V(G)\}$. The \emph{radius} of $G$ is defined by $\rad(G) = \min \{ \ecc_G(v) \mid v \in V(G)\}$. A graph $G$ is called \emph{self-centered} if $\rad(G) = \diam(G)$.
\end{defn}

\begin{lem}\label{lem_ecc} 
Let $G$ be a simple connected graph of diameter $D$. Assume that $G^{D-1}$ is well-covered. Then every vertex $v$ of $G$ has eccentricity $D$. In other words, $G$ is self-centered.
\end{lem}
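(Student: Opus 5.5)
We argue by contradiction, comparing two maximal independent sets of $G^{D-1}$ of different sizes. Independent sets in $G^{D-1}$ are sets of vertices with pairwise distance at least $D$ in $G$. Since $D$ is the diameter, every such pair is at distance exactly $D$.

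First we may assume $D \ge 2$. If $D=1$, then $G$ is complete and every vertex trivially has eccentricity $1$; for $D = 1$ the graph $G^{0}$ is not even defined, so the statement is vacuous there.

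Now fix vertices $a,b$ with $d_G(a,b)=D$. Then $\{a,b\}$ is independent in $G^{D-1}$, so extending it gives a maximal independent set of size at least $2$.

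Suppose, for contradiction, that some vertex $v$ has $\ecc_G(v) \le D-1$. Then $v$ is within distance $D-1$ of every vertex, so $v$ is adjacent in $G^{D-1}$ to all other vertices. Hence $\{v\}$ is a maximal independent set of $G^{D-1}$.

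Therefore $G^{D-1}$ has maximal independent sets of sizes $1$ and at least $2$. Taking complements, it has minimal vertex covers of sizes $n-1$ and at most $n-2$, where $n=|V(G)|$. This contradicts the assumption that $G^{D-1}$ is well-covered.

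It follows that $\ecc_G(v) \ge D$ for every $v$. Since $\ecc_G(v) \le D$ always holds by definition of the diameter, every vertex has eccentricity exactly $D$. Thus $\rad(G)=\diam(G)$, i.e.\ $G$ is self-centered.

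There is no real obstacle here. The only point needing care is the degenerate case $D=1$, where $G^{D-1}$ must either be read as the edgeless graph or the hypothesis taken as vacuous. In both readings a complete graph is self-centered, so the conclusion holds.
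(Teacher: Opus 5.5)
Your proposal is correct and follows essentially the same argument as the paper: a vertex of eccentricity less than $D$ gives the maximal independent set $\{v\}$ in $G^{D-1}$, while a diametral pair extends to a maximal independent set of size at least $2$, contradicting well-coveredness. Your extra remark on the degenerate case $D=1$ is a harmless addition that the paper leaves implicit.
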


\begin{proof} 
Assume for contradiction that $\ecc_G(v) < D$. Then $N_{G^{D-1}}[v] = V(G)$. Hence, the maximal independent set of $G^{D-1}$ containing $v$ is $\{v\}$, which has size $1$. However, since $\diam(G) = D$, there exist vertices $u, w \in V(G)$ such that $d_G(u,w) = D > D-1$, meaning $u$ and $w$ are independent in $G^{D-1}$. Thus, any maximal independent set containing $\{u, w\}$ has size at least $2$, contradicting the assumption that $G^{D-1}$ is well-covered. The conclusion follows.
\end{proof}

\begin{cor}
Let $G$ be a simple connected graph of diameter $D$. Assume that $D \ge 2$ and $G^{D-1}$ is well-covered. Then $G$ has no leaves.
\end{cor}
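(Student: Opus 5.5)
The plan is to derive the corollary directly from Lemma~\ref{lem_ecc}. By that lemma, since $G^{D-1}$ is well-covered, every vertex of $G$ has eccentricity exactly $D$. I will argue by contradiction: suppose $v$ is a leaf of $G$, and let $u$ be its unique neighbor. Since $D \ge 2$ and $G$ is connected, $G$ has more than two vertices, so $u$ has neighbors other than $v$. (If $G$ had only the two vertices $u,v$, its diameter would be $1$.)

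The key step is to compare the eccentricities of $v$ and $u$. Every path from $v$ to any vertex $w \neq v$ passes through $u$, so $d_G(v,w) = 1 + d_G(u,w)$ for all $w \neq v$. Hence $\ecc_G(v) = 1 + \max\{d_G(u,w) \mid w \neq v\}$. On the other hand, $\ecc_G(u) = \max\bigl(1, \max\{d_G(u,w) \mid w \neq v\}\bigr)$. By Lemma~\ref{lem_ecc}, $\ecc_G(v) = D$, so $\max\{d_G(u,w) \mid w \ne v\} = D-1 \ge 1$. It follows that $\ecc_G(u) = D-1 < D$. This contradicts the self-centeredness given by Lemma~\ref{lem_ecc}, so $G$ has no leaves.

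There is no real obstacle here. The only point that needs care is the degenerate case: we must check that $\max\{d_G(u,w) \mid w\neq v\}$ is defined and at least $1$, so that $\ecc_G(u)$ is genuinely $D-1$. This is guaranteed by $D \ge 2$, which forces a vertex other than $u$ and $v$ to exist.
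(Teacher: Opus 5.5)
Your proposal is correct and follows the paper's proof: both show that a leaf $v$ with unique neighbor $u$ gives $\ecc_G(u) = \ecc_G(v) - 1 = D-1$, contradicting the self-centeredness from Lemma~\ref{lem_ecc}. Your explicit check that $D \ge 2$ makes $\max\{d_G(u,w) \mid w \neq v\} \ge 1$ is a bit more careful than the paper, which asserts $\ecc_G(v) = \ecc_G(u) + 1$ without comment.
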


\begin{proof}
Suppose for the sake of contradiction that $G$ contains a leaf $v$ with unique neighbor $u$. Then for every vertex $w \notin \{u, v\}$, we have $d_G(v, w) = 1 + d_G(u, w)$. Consequently, $\operatorname{ecc}_G(v) = \operatorname{ecc}_G(u) + 1$, which contradicts Lemma \ref{lem_ecc}. The conclusion follows.
\end{proof}

Let $G$ be a simple graph and $v \in V(G)$ a vertex. For a non-negative integer $a \ge 0$, the \emph{closed ball} of radius $a$ centered at $v$ is defined as 
\[
B_a(v) = \{u \in V(G) \mid d_G(u,v) \le a\}.
\]
Note that $B_d(v) = N_{G^d}[v]$ is precisely the closed neighborhood of $v$ in $G^d$.

\begin{lem}\label{lem_d2}
Let $G$ be a simple graph such that $G^d$ is well-covered for some $d \ge 1$. Then there exist no vertices $s, t, w \in V(G)$ such that $d_G(s,t) \ge d+1$ and $B_d(s) \cup B_d(t) \subseteq B_d(w)$.
\end{lem}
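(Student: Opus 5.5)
We argue by contradiction. Suppose $s,t,w \in V(G)$ satisfy $d_G(s,t) \ge d+1$ and $B_d(s) \cup B_d(t) \subseteq B_d(w)$. The plan is to build two maximal independent sets of $G^d$ whose sizes differ by one. This contradicts well-coveredness, since maximal independent sets are exactly the complements of minimal vertex covers.

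First we record the relevant adjacencies in $G^d$. Since $d_G(s,t) \ge d+1$, the vertices $s$ and $t$ are non-adjacent in $G^d$, so $\{s,t\}$ is an independent set of $G^d$. Next we use $B_d(v) = N_{G^d}[v]$. The inclusion $B_d(s) \subseteq B_d(w)$ gives $s \in B_d(w)$, hence $w \in B_d(s)$. Similarly $w \in B_d(t)$. In particular $w \ne s,t$, because $s \notin B_d(t)$ and $t \notin B_d(s)$. The hypothesis then reads
\[
N_{G^d}[s] \cup N_{G^d}[t] \subseteq N_{G^d}[w].
\]

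Now extend $\{s,t\}$ to a maximal independent set $I$ of $G^d$. Set $J = (I \setminus \{s,t\}) \cup \{w\}$. We claim $J$ is independent in $G^d$. For this we must check that no $x \in I \setminus \{s,t\}$ is adjacent to $w$. Such an $x$ is non-adjacent to $s$ and to $t$, so $x \notin N_{G^d}[s] \cup N_{G^d}[t]$. This alone does not show $x \notin N_{G^d}[w]$, since the inclusion goes the wrong way. This is the main obstacle.

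To get around it, we reverse the roles of the two sets. Start instead with a maximal independent set $J$ of $G^d$ that contains $w$. Then replace $w$ by $s$ and $t$: let $I' = (J \setminus \{w\}) \cup \{s,t\}$.
\begin{itemize}
\item $I'$ is independent. Any $x \in J \setminus \{w\}$ lies outside $N_{G^d}[w]$, which contains $N_{G^d}[s] \cup N_{G^d}[t]$. So $x$ is adjacent to neither $s$ nor $t$. Also $s$ and $t$ are non-adjacent, and neither lies in $J$, since both are in $N_{G^d}[w]$ and differ from $w$.
\item $|I'| = |J| + 1$, because $s,t \notin J$.
\end{itemize}

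Finally, extend $I'$ to a maximal independent set $I''$ of $G^d$. Then $|I''| \ge |J| + 1 > |J|$, while $J$ is itself maximal. Two maximal independent sets of different sizes contradict the well-coveredness of $G^d$, which completes the proof.
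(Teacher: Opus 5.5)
Your proof is correct and takes the same approach as the paper: take a maximal independent set of $G^d$ containing $w$, then replace $w$ by $s$ and $t$ to get a strictly larger independent set, which contradicts well-coveredness. Your explicit checks that $w \ne s,t$ and $s,t \notin J$ spell out details the paper leaves implicit in its count $|Y| = |X| - 1 + 2$.
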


\begin{proof}
Suppose for the sake of contradiction that there exist vertices $s, t, w \in V(G)$ satisfying $d_G(s,t) \ge d+1$ and $B_d(s) \cup B_d(t) \subseteq B_d(w)$. Let $X$ be a maximal independent set of $G^d$ containing $w$. Since $G^d$ is well-covered, we have $|X| = \alpha(G^d)$.

By definition of independent sets in graph powers, no vertex in $X \setminus \{w\}$ can belong to $N_{G^d}[w] = B_d(w)$. Since $B_d(s) \cup B_d(t) \subseteq B_d(w)$, it follows that 
\[
(X \setminus \{w\}) \cap (B_d(s) \cup B_d(t)) = \emptyset.
\]
Furthermore, because $d_G(s,t) \ge d+1$, $\{s, t\}$ is an independent set in $G^d$. Consequently, the set $Y = (X \setminus \{w\}) \cup \{s, t\}$ is an independent set of $G^d$. However, 
\[
|Y| = |X| - 1 + 2 = |X| + 1 = \alpha(G^d) + 1,
\]
which contradicts the assumption that $\alpha(G^d)$ is the maximum size of an independent set in $G^d$. Thus, no such vertices $s, t, w$ can exist.
\end{proof}

\section{Well-coveredness of graph powers}\label{sec_mains}

\subsection{Chordal Graphs}
We begin by recalling several fundamental concepts in chordal graph theory.

\begin{defn}
Let $G$ be a graph. A vertex $v \in V(G)$ is called a \emph{simplicial vertex} if its closed neighborhood $N_G[v]$ induces a clique in $G$. A clique $K \subseteq V(G)$ is called a \emph{simplex} of $G$ if $K = N_G[v]$ for some simplicial vertex $v \in V(G)$.
\end{defn}
We have the following fundamental theorem due to Prisner, Topp, and Vestergaard \cite{PTV}:

\begin{thm}[Well-covered chordal graphs]\label{thm_well_covered_chordal}
A chordal graph $G$ is well-covered if and only if every vertex of $G$ lies in exactly one simplex.
\end{thm}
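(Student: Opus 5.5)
We prove the two directions separately. The backward direction is a counting argument. The forward direction has two parts: a vertex cannot lie in two simplices, and every vertex lies in some simplex. The first part is exactly Lemma~\ref{lem_d2} with $d=1$; the second part is where I expect the difficulty.

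\emph{Sufficiency.} Assume every vertex lies in exactly one simplex, so the distinct simplices $S_1,\dots,S_k$ partition $V(G)$. Write $S_i=N_G[v_i]$ with $v_i$ simplicial.
\begin{enumerate}
\item Each $S_i$ is a clique, so an independent set meets each $S_i$ at most once; hence every independent set has size at most $k$.
\item Let $I$ be a maximal independent set and suppose $I\cap S_i=\emptyset$ for some $i$. Since $N_G[v_i]=S_i$, no vertex of $I$ is adjacent to $v_i$, so $I\cup\{v_i\}$ is independent, contradicting maximality.
\item Therefore every maximal independent set meets every $S_i$, so has size exactly $k$, and $G$ is well-covered.
\end{enumerate}

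\emph{Necessity, at most one simplex.} Assume $G$ is chordal and well-covered. Suppose a vertex $u$ lies in two distinct simplices $N_G[v]$ and $N_G[w]$. Then $v,w$ are nonadjacent: if they were adjacent, simpliciality of $v$ would give $N_G[v]\subseteq N_G[w]$, symmetrically $N_G[w]\subseteq N_G[v]$, so the simplices would coincide. Since $N_G[v]$ is a clique containing $u$, every element of it is $u$ or a neighbour of $u$, so $N_G[v]\subseteq N_G[u]$; likewise $N_G[w]\subseteq N_G[u]$. Thus $B_1(v)\cup B_1(w)\subseteq B_1(u)$ with $d_G(v,w)\ge 2$, which Lemma~\ref{lem_d2} with $d=1$ forbids. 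Concretely, replacing $u$ by $\{v,w\}$ in a maximal independent set containing $u$ yields a larger independent set.

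\emph{Necessity, at least one simplex.} This is the step I expect to be the main obstacle. Suppose $x$ lies in no simplex, and choose one simplicial vertex $v_i$ from each distinct simplex $S_i$.
\begin{enumerate}
\item By the previous step the $v_i$ are pairwise nonadjacent.
\item No $v_i$ is adjacent to $x$, since otherwise $x\in N_G[v_i]=S_i$. Hence $\{v_1,\dots,v_k,x\}$ is independent, and any maximal independent set containing it has size at least $k+1$.
\item It remains to produce a maximal independent set of size at most $k$, i.e.\ to show $\alpha(G)\le k$.
\end{enumerate}

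My plan for $\alpha(G)\le k$ is to use a perfect elimination ordering together with the fact that $G\setminus N_G[v]$ is again well-covered and chordal for any simplicial $v$. I would argue by induction on $|V(G)|$ that the simplices of $G$ cover $V(G)$. If that induction does not close, I would fall back on the classical fact that chordal graphs are perfect, so $\alpha(G)$ equals the minimum number of cliques covering $V(G)$. I would then try to show, using the disjointness of simplices already established, that a minimum clique cover can be taken to consist of simplices; this would give $\alpha(G)\le k$ and complete the contradiction.
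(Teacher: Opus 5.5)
Your sufficiency argument and your proof that no vertex lies in two simplices are both correct; neither actually needs chordality, and the second is Lemma~\ref{lem_d2} with $d=1$. The paper gives no proof of this theorem and simply quotes it from \cite{PTV}, so the only question is whether your argument stands on its own. It does not: the step ``every vertex lies in some simplex'', which you rightly flag as the crux, is never carried out. Both plans stop exactly where the work is.

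\begin{itemize}
\item \textbf{The induction plan.} You do not explain why the simplices of $G\setminus N_G[v]$ given by the induction hypothesis should be simplices of $G$. A simplicial vertex of $G\setminus N_G[v]$ may have non-adjacent neighbours in $N_G(v)$, and ruling this out is the real content.
\item \textbf{The fallback plan.} ``A minimum clique cover consists of simplices'' is essentially a restatement of the goal. The tool you propose for it, chordality plus disjointness of simplices, cannot suffice. Indeed, $P_5$ is chordal with disjoint simplices $\{p_1,p_2\}$ and $\{p_4,p_5\}$, yet $\theta(P_5)=\alpha(P_5)=3>2$. Well-coveredness has to be used a second time.
\end{itemize}

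Here is the missing idea that makes your fallback work. Let $\mathcal Q$ be a partition of $V(G)$ into $\alpha(G)$ cliques, which exists because chordal graphs are perfect. Since $G$ is well-covered, every maximal independent set has $\alpha(G)$ vertices and at most one in each member of $\mathcal Q$, so it meets every $Q\in\mathcal Q$.

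Now suppose some $Q\in\mathcal Q$ has no vertex $q$ with $N_G[q]=Q$. Then every vertex of $Q$ has a neighbour outside $Q$. So there is an inclusion-minimal $A\subseteq V(G)\setminus Q$ with $Q\subseteq N_G(A)$. By minimality, each $p\in A$ has a private neighbour $c_p\in Q$ adjacent to no other vertex of $A$. If two vertices $p,p'\in A$ were adjacent, then $p,c_p,c_{p'},p'$ would be four distinct vertices inducing a $C_4$, contradicting chordality. Thus $A$ is independent, and any maximal independent set containing $A$ misses $Q$, a contradiction.

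Hence every $Q\in\mathcal Q$ equals $N_G[q]$ for some simplicial $q\in Q$, i.e.\ every member of $\mathcal Q$ is a simplex. So the simplices cover $V(G)$, and $\alpha(G)=|\mathcal Q|\le k$, which is exactly the bound you were after.
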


\begin{proof}[Proof of Theorem \ref{thm_chordal}] 
The implications $(1) \implies (2)$ and $(3) \implies (1)$ are obvious. It remains to prove that if $G$ and $G^2$ are well-covered, then $G$ is a complete graph. 

By Theorem \ref{thm_well_covered_chordal}, since $G$ is a well-covered chordal graph, its vertices are partitioned into pairwise disjoint simplices $S_1, \ldots, S_m$. For each $1 \le i \le m$, let $s_i \in S_i$ be a simplicial vertex. Then $\{s_1, \ldots, s_m\}$ is an independent set in $G^2$ because $N_G[s_i] = S_i$ and $S_i \cap S_j = \emptyset$ for all $i \neq j$, meaning $d_G(s_i, s_j) \ge 3$. Moreover, every vertex in $G$ belongs to some $S_i = N_G[s_i]$ and is thus within distance $1$ or $2$ from $s_i$. Therefore, $\{s_1, \ldots, s_m\}$ is a maximal independent set of $G^2$, so $\alpha(G^2) = m$.

Suppose for the sake of contradiction that $G$ is not a complete graph, so $m \ge 2$ and there exists a non-simplicial vertex $v \in V(G)$. Without loss of generality, assume $v \in S_a$. Since $v$ is non-simplicial, $N_G[v]$ is not a clique, so $v$ must have a neighbor $w$ outside $S_a$; say $w \in S_b$ for some $b \neq a$. Since $S_a = N_G[s_a]$ and $S_b = N_G[s_b]$ are cliques, every vertex in $S_a \cup S_b$ is within distance at most $2$ from $v$ in $G$, so $S_a \cup S_b \subseteq N_{G^2}[v]$. 

Consequently, any maximal independent set $X$ of $G^2$ containing $v$ cannot contain any vertices from $S_a \cup S_b$. Since $X$ can pick at most one vertex from each remaining simplex $S_k$ ($k \notin \{a, b\}$), we obtain $|X| \le m - 1 < \alpha(G^2)$, contradicting the assumption that $G^2$ is well-covered. Thus, $G$ contains no non-simplicial vertices, so $m = 1$ and $G$ is a complete graph.
\end{proof}

\subsection{Very well-covered graphs} We recall the following characterization of very well-covered graphs due to Favaron \cite[Theorem 1.2]{F}.

\begin{thm}[Very well-covered graphs]\label{thm_very_well_char} 
A graph $G$ with a perfect matching $M$ is very well-covered if and only if every edge $\{u, u'\} \in M$ satisfies the following conditions:
\begin{enumerate}
    \item $N_G(u) \cap N_G(u') = \emptyset$;
    \item every vertex $x \in N_G(u) \setminus \{u'\}$ is adjacent to every vertex $y \in N_G(u') \setminus \{u\}$.
\end{enumerate}
\end{thm}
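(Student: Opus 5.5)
We prove both directions by counting, using the perfect matching $M$. Since $M$ is perfect, $|M| = n/2$ where $n = |V(G)|$. Any independent set of $G$ contains at most one endpoint of each edge of $M$, so $\alpha(G) \le n/2$, and an independent set has size exactly $n/2$ precisely when it meets every edge of $M$. Recall that $G$ is very well-covered if it is well-covered, has no isolated vertices, and $\alpha(G) = n/2$. A graph with a perfect matching has no isolated vertices. Hence, for such $G$, being very well-covered is equivalent to the following property:
\[
(\ast)\quad \text{every maximal independent set of } G \text{ contains an endpoint of every edge of } M.
\]
We verify $(\ast)$ directly against conditions (1) and (2).

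\emph{Sufficiency.} Assume (1) and (2) hold for every $\{u,u'\} \in M$. Let $I$ be a maximal independent set, and suppose $u, u' \notin I$ for some $\{u,u'\}\in M$.
\begin{itemize}
\item By maximality, $u$ has a neighbor $x \in I$. Since $u' \notin I$, we have $x \neq u'$.
\item Likewise $u'$ has a neighbor $y \in I$ with $y \neq u$.
\item If $x = y$, then $x \in N_G(u)\cap N_G(u')$, contradicting (1).
\item Otherwise $x \in N_G(u)\setminus\{u'\}$ and $y \in N_G(u')\setminus\{u\}$, so $x$ and $y$ are adjacent by (2). This contradicts the independence of $I$.
\end{itemize}
Thus $(\ast)$ holds, and $G$ is very well-covered.

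\emph{Necessity.} Assume $G$ is very well-covered and fix $\{u,u'\}\in M$.
\begin{itemize}
\item If (1) fails, pick $z \in N_G(u)\cap N_G(u')$. Extend $\{z\}$ to a maximal independent set $I$. Since $z$ is adjacent to both $u$ and $u'$, $I$ avoids both of them, violating $(\ast)$.
\item If (2) fails, pick non-adjacent $x \in N_G(u)\setminus\{u'\}$ and $y \in N_G(u')\setminus\{u\}$. If $x = y$, then (1) fails, and we are in the previous case. Otherwise $\{x,y\}$ is independent, so extend it to a maximal independent set $I$. Since $x \sim u$ and $y \sim u'$, $I$ contains neither $u$ nor $u'$, again violating $(\ast)$.
\end{itemize}
Hence (1) and (2) hold for every edge of $M$.

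The only step needing care is the reduction to $(\ast)$. It uses the precise definition of very well-covered (well-covered, no isolated vertices, and $\alpha(G)=n/2$) together with the bound $\alpha(G)\le n/2$ coming from $M$. After that reduction, both directions are short case checks.
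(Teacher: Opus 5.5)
Your proof is correct. The paper gives no argument for this statement; it quotes it from Favaron \cite[Theorem 1.2]{F}. So your write-up is a self-contained replacement for that citation rather than an alternative to an in-paper proof.

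The key step is recasting very well-coveredness, given a perfect matching $M$, as the condition $(\ast)$ that every maximal independent set meets every edge of $M$. This is sound: the edges of $M$ partition $V(G)$, so an independent set has size $n/2$ exactly when it contains one endpoint of each matching edge.

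Both directions then go through, and you handled the degenerate cases correctly. In sufficiency, $x \neq u'$ and $y \neq u$ hold because $u,u' \notin I$. In necessity, the case $x = y$ reduces to the failure of (1). Your necessity argument also shows that \emph{every} perfect matching of a very well-covered graph satisfies (1) and (2).

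One point to keep in mind concerns how the paper uses this result. In the proof of Theorem~\ref{thm_very_well}, it also needs the fact that a very well-covered graph has a perfect matching at all. That fact is part of Favaron's theorem but is not contained in the statement as quoted. Your argument, rightly for this statement, takes $M$ as given and so does not supply it.
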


\begin{proof}[Proof of Theorem \ref{thm_very_well}]
The implications $(1) \implies (2)$ and $(3) \implies (1)$ are obvious. It remains to prove that if $G$ is very well-covered and $G^2$ is well-covered, then $G$ is a complete bipartite graph.

Let $M$ be a perfect matching of $G$ satisfying the conditions of Theorem~\ref{thm_very_well_char}. For each vertex $x \in V(G)$, denote by $x'$ its unique matched partner under $M$. 

\smallskip
\noindent\textbf{Step 1:} \emph{Construction of neighborhood sets $S$ and $T$.}

Let $u \in V(G)$ be a vertex of minimum degree in $G$. Set $S = N_G(u)$ and $T = N_G(u')$. 

For any vertex $x \in S = N_G(u)$, applying condition (2) of Theorem~\ref{thm_very_well_char} to the matching edge $\{x, x'\}$ with $u \in N_G(x)$ yields $N_G(x') \setminus \{x\} \subseteq N_G(u)$. Since $x \in N_G(u)$, it follows that $N_G(x') \subseteq N_G(u)$. By the minimality of $\deg_G(u)$, we obtain $N_G(x') = N_G(u) = S$.

Next, applying condition (2) to the matching edge $\{u, u'\}$ with $x \in N_G(u)$ implies $N_G(u') \setminus \{u\} \subseteq N_G(x)$. Since $x \in N_G(u)$, we also have $u \in N_G(x)$, whence $N_G(u') \subseteq N_G(x)$. Conversely, applying condition (2) to the matching edge $\{x, x'\}$ with $u' \in N_G(x')$ gives $N_G(x) \subseteq N_G(u')$. Thus, 
\[
N_G(x) = N_G(u') = T \quad \text{for all } x \in S.
\]
In particular, $S$ and $T$ are disjoint independent sets, and every vertex in $S$ is adjacent to every vertex in $T$. Furthermore, for any $w \in T$, since $u' \in N_G(w)$, applying condition (2) to the matching edge $\{w, w'\}$ yields $N_G(w') \subseteq N_G(u') = T$.

\smallskip
\noindent\textbf{Step 2:} \emph{If there exists $w \in T$ such that $w' \notin S$, then $G^2$ is not well-covered.}

Suppose there exists $w \in T$ such that $w' \notin S$. Since $w' \notin S = N_G(u)$ and $N_G(w') \subseteq T = N_G(u')$, the sequence $(u, u', w, w')$ forms an induced path of length $3$ in $G$, so $d_G(u, w') = 3$. We compare the closed $2$-balls centered at $u$, $u'$, and $w'$:
\begin{align*}
    B_2(u) &= \{u\} \cup S \cup T, \\
    B_2(u') &= \{u'\} \cup T \cup N_G(T), \\
    B_2(w') &= \{w'\} \cup N_G(w') \cup N_G(N_G(w')).
\end{align*}
Since $N_G(w') \subseteq T$, we have $N_G(N_G(w')) \subseteq N_G(T)$, which implies $B_2(w') \subseteq B_2(u')$. Moreover, $B_2(u) \subseteq B_2(u')$ clearly holds. Therefore,
\[
B_2(u) \cup B_2(w') \subseteq B_2(u') \quad \text{with } d_G(u, w') \ge 3,
\]
which contradicts Lemma~\ref{lem_d2}. Hence, $G^2$ cannot be well-covered under this assumption.

\smallskip
\noindent\textbf{Step 3:} \emph{If $w' \in S$ for all $w \in T$, then $G \cong K_{n,n}$.}

Suppose $w' \in S$ for all $w \in T$. Then for every $w \in T$, $w'$ is a vertex in $S$, so $N_G(w') = T$. 

Now assume there exists some $w \in T$ having a neighbor $z \notin S$. Since $w' \in S$, the pair $\{w, w'\}$ is a matching edge with $N_G(w') = T$. By condition (2) applied to $\{w, w'\}$, the neighbor $z \in N_G(w)$ must be adjacent to every vertex in $N_G(w') = T$. In particular, $z$ is adjacent to $u$, which implies $z \in N_G(u) = S$, a contradiction. 

Therefore, $N_G(w) = S$ for all $w \in T$, and $N_G(s) = T$ for all $s \in S$. Consequently, $S \cup T$ forms a connected component of $G$. Since $G$ is connected, $V(G) = S \cup T$. Finally, since $G$ is very well-covered, $|S| = |T| = n$, and thus $G \cong K_{n,n}$.

The conclusion follows.
\end{proof}

\subsection{Cochordal graphs}
The case of cochordal graphs will follow from the following lemma.

\begin{lem}\label{lem_rad_cochor} 
Let $G$ be a connected cochordal graph. Then $\operatorname{rad}(G) \le 2$.
\end{lem}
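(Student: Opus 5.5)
The plan is to pass to the complement and use a simplicial vertex there as a center of $G$. If $|V(G)| = 1$ then $\operatorname{rad}(G) = 0$, so assume $|V(G)| \ge 2$. Let $H = \overline{G}$, which is chordal by assumption. I would invoke the standard fact (Dirac) that every chordal graph has a simplicial vertex. Pick such a vertex $v$ of $H$. The goal is to show $\operatorname{ecc}_G(v) \le 2$, which gives $\operatorname{rad}(G) \le 2$ directly.

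First, translate simpliciality into $G$. Since $N_H[v]$ is a clique in $H$, it is an independent set in $G$. Moreover, $V(G) = N_G[v] \sqcup N_H(v)$. So every vertex other than $v$ is either a $G$-neighbor of $v$, at distance $1$, or lies in $N_H(v)$. It then remains to show that each $u \in N_H(v)$ satisfies $d_G(u,v) = 2$.

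Fix $u \in N_H(v)$. Because $G$ is connected and has at least two vertices, $u$ has some neighbor $w$ in $G$. Since $N_H[v]$ is independent in $G$ and contains $u$, the vertex $w$ cannot lie in $N_H[v]$. Hence $w \in N_G(v)$, and $u - w - v$ is a path of length $2$ in $G$. Therefore every vertex is within distance $2$ of $v$, so $\operatorname{ecc}_G(v) \le 2$.

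No step is really an obstacle. The only point needing care is the observation that $N_H[v]$, not just $N_H(v)$, is independent in $G$; this is what forces the neighbor $w$ of $u$ into $N_G(v)$. Beyond that, the argument relies only on the existence of a simplicial vertex in a chordal graph, which I take as known.
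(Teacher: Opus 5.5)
Your proof is correct and follows essentially the same route as the paper. Both take a simplicial vertex $v$ of $\overline{G}$ and note that every vertex outside $N_{\overline{G}}[v]$ is a $G$-neighbor of $v$. Each $u \in N_{\overline{G}}(v)$ then has, by connectivity, a $G$-neighbor, which must lie outside the $G$-independent set $N_{\overline{G}}[v]$ and so gives a path of length $2$ to $v$. Your direct appeal to the independence of $N_{\overline{G}}[v]$ in $G$ is slightly cleaner than the paper's version, which argues by contradiction from $N_G(t) \subseteq Q$ to conclude that $t$ is isolated, but the mathematical content is the same.
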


\begin{proof} 
Let $H = \overline{G}$ be the complement graph of $G$. By definition, $H$ is a chordal graph. Let $s$ be a simplicial vertex of $H$, and let $Q = N_H[s]$ be the corresponding clique in $H$. By the definition of graph complementation, all vertices $u \notin Q$ are adjacent to $s$ in $G$.

Now, let $t \in Q \setminus \{s\}$. Suppose for the sake of contradiction that $N_G(t) \subseteq Q$. This implies that $N_H[t] \supseteq V(G) \setminus Q$. Since $Q$ is a clique in $H$, $t$ is also adjacent in $H$ to every vertex in $Q \setminus \{t\}$, so $N_H(t) \supseteq Q \setminus \{t\}$. Consequently, $N_H(t) = V(G) \setminus \{t\}$, which means $t$ has no neighbors in $G$. Thus, $t$ is an isolated vertex in $G$, contradicting the assumption that $G$ is connected.

Therefore, every $t \in Q \setminus \{s\}$ must have a neighbor $z \notin Q$ in $G$. Since $s$ is adjacent to $z$ in $G$, the sequence $(s, z, t)$ forms a path of length $2$ in $G$, so $d_G(s, t) \le 2$. Because $s$ is also adjacent in $G$ to every vertex outside $Q$, we obtain $\operatorname{ecc}_G(s) \le 2$. It follows that $\operatorname{rad}(G) \le \operatorname{ecc}_G(s) \le 2$.
\end{proof}

\begin{proof}[Proof of Theorem \ref{thm_cochor}] 
The implications $(1) \implies (2)$ and $(3) \implies (1)$ are obvious. The implication $(2) \implies (3)$ follows directly from Lemma \ref{lem_rad_cochor} and Lemma \ref{lem_ecc}.
\end{proof}

\subsection{Clique substitutions}
\begin{proof}[Proof of Theorem \ref{thm_clique_blowup}]
Let $V(G) = \{v_1, \ldots, v_n\}$. By definition of the clique blow-up, $H = G[K_{a_1}, \ldots, K_{a_n}]$ is obtained by replacing each vertex $v_i \in V(G)$ with a clique $Q_i \cong K_{a_i}$, where $a_i \ge 1$. Two distinct vertices $x \in Q_i$ and $y \in Q_j$ are adjacent in $H$ if and only if $i = j$ or $v_i v_j \in E(G)$.

Define the canonical projection map $\pi: V(H) \to V(G)$ by setting $\pi(x) = v_i$ for all $x \in Q_i$.

\smallskip
\noindent\textbf{Claim 1:} \emph{For any $x, y \in V(H)$ with $\pi(x) \neq \pi(y)$, we have $d_H(x, y) = d_G(\pi(x), \pi(y))$.}

\begin{proof}[Proof of Claim 1]
Let $x \in Q_i$ and $y \in Q_j$ with $i \neq j$. If $v_i = u_0, u_1, \dots, u_k = v_j$ is a path of length $k$ in $G$, we can construct a path of length $k$ in $H$ from $x$ to $y$ by selecting an arbitrary representative $x_r \in Q_{r}$ for each $1 \le r \le k-1$ and setting $x_0 = x$ and $x_k = y$. Thus, $d_H(x, y) \le d_G(v_i, v_j)$. Conversely, if $x = y_0, y_1, \dots, y_m = y$ is a path of length $m$ in $H$, then $\pi(y_0), \pi(y_1), \dots, \pi(y_m)$ yields a walk (and hence contains a path) from $v_i$ to $v_j$ in $G$ of length at most $m$. Therefore, $d_G(v_i, v_j) \le d_H(x, y)$, establishing $d_H(x, y) = d_G(v_i, v_j)$.
\end{proof}

Fix an integer $d \ge 1$. By Claim 1, for any two distinct vertices $x \in Q_i$ and $y \in Q_j$, we have $d_H(x, y) \le d$ if and only if $i = j$ or $d_G(v_i, v_j) \le d$. In other words, two distinct vertices $x, y \in V(H)$ are adjacent in $H^d$ if and only if either $x, y$ belong to the same clique $Q_i$, or their projections $\pi(x), \pi(y)$ are adjacent in $G^d$.

\smallskip
\noindent\textbf{Claim 2:} \emph{A subset $I \subseteq V(H)$ is a maximal independent set of $H^d$ if and only if $I$ contains exactly one vertex from $Q_i$ for each $v_i \in J$, where $J = \pi(I)$ is a maximal independent set of $G^d$.}

\begin{proof}[Proof of Claim 2]
Suppose $I$ is a maximal independent set of $H^d$. 
\begin{enumerate}
    \item Since every clique $Q_i$ induces a clique in $H^d$, $I$ can contain at most one vertex from each $Q_i$. Thus, the restriction of $\pi$ to $I$ is injective, so $|\pi(I)| = |I|$.
    \item For any two distinct vertices $x, y \in I$, we have $d_H(x, y) \ge d+1$. By Claim 1, $d_G(\pi(x), \pi(y)) = d_H(x,y) \ge d+1$, which implies that $J = \pi(I)$ is an independent set of $G^d$.
    \item To show $J$ is maximal in $G^d$, suppose there exists $v_k \in V(G) \setminus J$ such that $J \cup \{v_k\}$ is independent in $G^d$. Pick an arbitrary vertex $z \in Q_k$. Then $d_G(v_k, \pi(x)) \ge d+1$ for all $x \in I$, so $d_H(z, x) \ge d+1$ for all $x \in I$. Hence $I \cup \{z\}$ is an independent set of $H^d$, contradicting the maximality of $I$. Thus, $J$ is a maximal independent set of $G^d$.
\end{enumerate}

Conversely, let $J$ be a maximal independent set of $G^d$. For each $v_i \in J$, pick a vertex $x_i \in Q_i$, and set $I = \{x_i \mid v_i \in J\}$. By Claim 1, $d_H(x_i, x_j) = d_G(v_i, v_j) \ge d+1$ for all $i \neq j$, so $I$ is independent in $H^d$. If $I$ were not maximal in $H^d$, there would exist $y \in V(H) \setminus I$ such that $I \cup \{y\}$ is independent in $H^d$. If $y \in Q_i$ for some $v_i \in J$, then $y$ is adjacent to $x_i \in I$ in $H \subseteq H^d$, a contradiction. If $y \in Q_k$ for some $v_k \notin J$, then $d_G(v_k, v_i) = d_H(y, x_i) \ge d+1$ for all $v_i \in J$, which implies $J \cup \{v_k\}$ is an independent set of $G^d$, contradicting the maximality of $J$. Therefore, $I$ is a maximal independent set of $H^d$ with $|I| = |J|$.
\end{proof}

By Claim 2, the cardinalities of all maximal independent sets of $H^d$ are in one-to-one correspondence with the cardinalities of the maximal independent sets of $G^d$. Consequently, all maximal independent sets of $H^d$ have the same cardinality if and only if all maximal independent sets of $G^d$ have the same cardinality. Hence, $G^d$ is well-covered for all $d \ge 1$ if and only if $H^d$ is well-covered for all $d \ge 1$.
\end{proof}

\begin{cor}\label{cor_twins}
Let $G$ be a simple graph. Suppose $u, v \in V(G)$ are true twins in $G$, that is, $\{u,v\} \in E(G)$ and $N_G[u] = N_G[v]$. Let $F$ be the graph obtained by contracting the edge $\{u,v\}$ to a single vertex. Then $G^d$ is well-covered for all $d \ge 1$ if and only if $F^d$ is well-covered for all $d \ge 1$.
\end{cor}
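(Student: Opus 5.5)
The plan is to reduce Corollary~\ref{cor_twins} to Theorem~\ref{thm_clique_blowup} by expressing both $G$ and $F$ as clique blow-ups of a common graph. Consider the relation on $V(G)$ given by $x \sim y$ if and only if $N_G[x] = N_G[y]$. It is an equivalence relation, and each class is a clique whenever it has more than one element, since $y \in N_G[y] = N_G[x]$. Let $R$ be the quotient graph: its vertices are the classes, and two distinct classes are adjacent if some (equivalently, every) pair of representatives is adjacent. I would first check that this adjacency is well defined. If $x \sim x'$ and $y \sim y'$ with $xy \in E(G)$, then $y \in N_G[x] = N_G[x']$, so $x' \in N_G[y] = N_G[y']$. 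Then $G \cong R[K_{a_1}, \ldots, K_{a_m}]$, where $a_i$ is the size of the $i$-th class.

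The second step is to show that $F$ is a clique blow-up of the same $R$, with the class $C$ containing $u$ and $v$ having size reduced by one. Concretely, $F = G \setminus \{v\}$: contracting the edge $\{u,v\}$ merges $v$ into $u$, and $u$ already has all of $v$'s neighbours, so nothing new is added. I would then verify that closed-neighbourhood classes in $F$ are the restrictions of those in $G$. For $x, y \ne v$ we have $N_F[x] = N_G[x] \setminus \{v\}$. So if $N_G[x] = N_G[y]$, then $N_F[x] = N_F[y]$. Conversely, if $N_F[x] = N_F[y]$, the only possible discrepancy is $v$. But $v \in N_G[x]$ if and only if $x \in N_G[v] = N_G[u]$, if and only if $u \in N_G[x]$, and since $u \ne x, v$ this is equivalent to $u \in N_F[x]$. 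The same holds for $y$. Hence $N_G[x] = N_G[y]$. Thus the classes of $F$ are those of $G$ with $C$ replaced by $C \setminus \{v\}$, which is nonempty because it contains $u$. The adjacency between classes is unchanged, so $F \cong R[K_{a_1}, \ldots, K_{a_i - 1}, \ldots, K_{a_m}]$.

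Finally, I would apply Theorem~\ref{thm_clique_blowup} twice. $G^d$ is well-covered for all $d$ if and only if $R^d$ is, and this holds if and only if $F^d$ is well-covered for all $d$.

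The only delicate point is the second step: the twin classes must not change after deleting $v$, so that $F$ is a blow-up of the same reduced graph $R$. The argument above settles this via the symmetric observation $v \in N_G[x] \iff u \in N_G[x]$.

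A more direct alternative avoids $R$. Apply Theorem~\ref{thm_clique_blowup} with $G$ replaced by $F$, taking $a_u = 2$ and all other $a_i = 1$. The resulting blow-up is exactly $G$, because $v$ becomes a true twin of $u$. This is the cleanest route, and it is the one I would write up: the second and third paragraphs' bookkeeping then reduces to noting that $F = G \setminus \{v\}$ and $G = F[K_1, \ldots, K_2, \ldots, K_1]$.
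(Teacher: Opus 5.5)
Your proposal is correct, and the route you say you would write up is exactly the paper's proof: $F \cong G \setminus \{v\}$ because $u$ already sees all of $v$'s neighbours, and $G$ is the clique blow-up of $F$ in which the merged vertex is replaced by $K_2$ and every other vertex by $K_1$, so Theorem~\ref{thm_clique_blowup} applies directly. The twin-class quotient $R$ in your first three paragraphs is also valid, but it is an unnecessary detour; also, your aside ``since $u \ne x, v$'' really only needs $u \neq v$, because the case $x = u$ is trivial.
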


\begin{proof}
Since $u$ and $v$ are true twins, contracting the edge $uv$ to a single vertex $w$ yields a graph $F$ isomorphic to the deletion $G - v$. Consequently, $G$ is isomorphic to the clique blow-up of $F$ obtained by replacing the vertex $w$ with a clique $K_2$ and replacing every other vertex $x \in V(F) \setminus \{w\}$ with $K_1$. The result then follows directly from Theorem \ref{thm_clique_blowup}.
\end{proof}

\begin{prop}\label{prop_ind_blow}
Let $G$ be a connected simple graph with $V(G) = \{v_1, \ldots, v_n\}$, and let $r \ge 1$ be an integer. Let $H = G[r K_1, \ldots, r K_1]$ be the $r$-fold independence blow-up of $G$. 
Then $\operatorname{diam}(H) = \max\{\operatorname{diam}(G), 2\}$ for $r \ge 2$, and $H^d$ is well-covered for all $d \ge 1$ if and only if $G^d$ is well-covered for all $d \ge 1$.
\end{prop}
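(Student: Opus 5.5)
The plan is to redo the projection argument from the proof of Theorem~\ref{thm_clique_blowup}, keeping in mind one new feature: each fiber is now an independent set instead of a clique. Write $Q_i$ for the set of $r$ pairwise non-adjacent copies of $v_i$, and let $\pi: V(H)\to V(G)$ be the projection. Throughout I assume $n\ge 2$. If $n=1$, then $H=rK_1$ has no edges and is disconnected, so the diameter formula must be read with that degenerate case excluded. The well-covered equivalence is trivial there, since every power of an edgeless graph is edgeless.

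\textbf{Step 1: distances in $H$.} First I show that $d_H(x,y)=d_G(\pi(x),\pi(y))$ whenever $\pi(x)\neq\pi(y)$. The argument is that of Claim~1. A path in $G$ lifts to a path in $H$ by choosing arbitrary representatives of the interior vertices. Conversely, consecutive vertices on a path in $H$ lie in adjacent, hence distinct, fibers, so projecting a path in $H$ gives a walk in $G$ of the same length.

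Next, for distinct $x,y\in Q_i$ I show $d_H(x,y)=2$. They are non-adjacent because $Q_i$ is independent. Since $G$ is connected and $n\ge2$, the vertex $v_i$ has a neighbor $v_j$, and any $z\in Q_j$ is a common neighbor of $x$ and $y$. Together these two facts give $\operatorname{diam}(H)=\max\{\operatorname{diam}(G),2\}$ when $r\ge 2$. The value $2$ is attained inside any fiber, and every distance between different fibers is a distance of $G$.

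\textbf{Step 2: powers with $d\ge 2$.} For $d\ge2$, Step 1 shows that two distinct vertices $x,y$ are adjacent in $H^d$ exactly when $\pi(x)=\pi(y)$ or $\pi(x)\pi(y)\in E(G^d)$. Hence $H^d\cong G^d[K_r,\dots,K_r]$. Claim~2 in the proof of Theorem~\ref{thm_clique_blowup} then applies verbatim: maximal independent sets of $H^d$ have the same cardinalities as maximal independent sets of $G^d$. Therefore $H^d$ is well-covered if and only if $G^d$ is.

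\textbf{Step 3: the case $d=1$.} This is the step where the clique argument breaks, since fibers are now independent in $H$, and I expect it to be the main obstacle. Let $I$ be a maximal independent set of $H$. Then $J=\pi(I)$ is independent in $G$, because vertices in different fibers are adjacent exactly when their projections are. Maximality of $I$ forces $I=\bigcup_{v_i\in J}Q_i$. Indeed, any missing copy in $Q_i$ with $v_i\in J$ has the same neighborhood as the chosen copies, so it could be added to $I$. Maximality of $I$ also forces $J$ to be maximal in $G$, by the same argument as in Claim~2. Conversely, if $J$ is maximal independent in $G$, then $\bigcup_{v_i\in J}Q_i$ is maximal independent in $H$.

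So the maximal independent sets of $H$ have sizes exactly $r|J|$, where $J$ ranges over the maximal independent sets of $G$. Since $r\ge1$ is fixed, $H$ is well-covered if and only if $G$ is. Combining Steps 2 and 3 gives the equivalence for all $d\ge1$.
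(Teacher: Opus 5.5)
Your proof is correct, and it follows the same skeleton as the paper's. Both use the projection $\pi$ and the distance formula: $d_H=d_G\circ\pi$ across fibers, and $d_H=2$ inside a fiber. Both also note that for $d\ge 2$ the fibers become cliques in $H^d$, so that $H^d\cong G^d[K_r,\dots,K_r]$ and the clique blow-up correspondence (Claim 2 applied with $d=1$ to $G^d$) finishes.

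The real difference is the case $d=1$, and there your version is the correct one. The paper asserts that every maximal independent set of $H$ contains at most one vertex from each $Q_i$, and that $\pi$ is a cardinality-preserving bijection on maximal independent sets. Both claims are false for $r\ge 2$. For $G=K_2$ and $r=2$ one gets $H=C_4$, whose maximal independent sets are the two fibers, each of size $2$.

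Your argument is what actually makes the $d=1$ equivalence go through. The copies in a fiber are non-adjacent false twins, so maximal independent sets of $H$ are exactly the unions $\bigcup_{v_i\in J}Q_i$ with $J$ maximal independent in $G$. These have size $r|J|$, and well-coveredness is preserved under scaling all sizes by $r$.

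Your exclusion of $n=1$, $r\ge 2$ from the diameter formula is also a legitimate correction. In that case $H$ is edgeless and disconnected, and the paper's step choosing a neighbor $v_j$ of $v_i$ fails. Your handling of the well-covered equivalence there, with all powers edgeless, is fine.
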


\begin{proof}
Let $Q_1, \ldots, Q_n$ denote the independent sets of size $r$ in $H$ corresponding to the vertices $v_1, \ldots, v_n$ of $G$, respectively, and let $\pi: V(H) \to V(G)$ be the canonical projection mapping each $x \in Q_i$ to $v_i$. Two distinct vertices $x, y \in V(H)$ are adjacent in $H$ if and only if $\pi(x) \neq \pi(y)$ and $\{\pi(x),\pi(y)\} \in E(G)$.

First, we analyze the distance function $d_H$ on $V(H)$:
\begin{enumerate}
    \item If $x, y \in Q_i$ with $x \neq y$ (which can occur when $r \ge 2$), then $x$ and $y$ are non-adjacent false twins in $H$. Since $G$ is connected, $v_i$ has at least one neighbor $v_j \in V(G)$. Any $z \in Q_j$ is adjacent to both $x$ and $y$ in $H$, so $(x, z, y)$ is a path of length $2$ in $H$. Thus $d_H(x, y) = 2$.
    \item If $x \in Q_i$ and $y \in Q_j$ with $i \neq j$, any shortest path between $v_i$ and $v_j$ in $G$ lifts to a path of the same length between $x$ and $y$ in $H$, and any path between $x$ and $y$ in $H$ projects to a walk of equal length between $v_i$ and $v_j$ in $G$. Thus $d_H(x, y) = d_G(v_i, v_j)$.
\end{enumerate}

Taking the maximum over all pairs $x, y \in V(H)$, it follows that $\operatorname{diam}(H) = \max\{\operatorname{diam}(G), 2\}$ when $r \ge 2$ (and $\operatorname{diam}(H) = \operatorname{diam}(G)$ when $r = 1$).

Next, fix $d \ge 1$. From the distance formula above, two distinct vertices $x, y \in V(H)$ are adjacent in $H^d$ if and only if either $d \ge 2$ and $\pi(x) = \pi(y)$, or $\{\pi(x), \pi(y)\} \in E(G^d)$. 

If $d = 1$, each $Q_i$ is an independent set in $H$, and every maximal independent set $I$ of $H$ contains at most one vertex from each $Q_i$. Hence, $\pi$ maps maximal independent sets of $H$ bijectively to maximal independent sets of $G$, preserving cardinality. If $d \ge 2$, each $Q_i$ becomes a clique in $H^d$, so any maximal independent set $I$ of $H^d$ contains exactly one vertex from each $Q_i$ for $v_i \in J$, where $J = \pi(I)$ is a maximal independent set of $G^d$.

In both cases, all maximal independent sets of $H^d$ have equal cardinality if and only if all maximal independent sets of $G^d$ have equal cardinality. Therefore, $H^d$ is well-covered for all $d \ge 1$ if and only if $G^d$ is well-covered for all $d \ge 1$.
\end{proof}

\begin{exam}\label{exam_c5_blowups}
Consider the following examples of clique and independence blow-ups of $C_5$:
\begin{enumerate}[(1)]
    \item Let $H_1 = C_5[K_2, K_1, K_1, K_1, K_1]$ be the clique blow-up of $C_5$ obtained by replacing a single vertex with a clique $K_2$.
    \item Let $H_2 = C_5[2K_1, 2K_1, 2K_1, 2K_1, 2K_1]$ be the $2$-fold independence blow-up of $C_5$ obtained by replacing every vertex with an independent set $2K_1$.
\end{enumerate}
By Theorem \ref{thm_clique_blowup} and Proposition \ref{prop_ind_blow}, both $H_1^d$ and $H_2^d$ are well-covered for all $d \ge 1$, as $C_5^d$ is well-covered for all $d \ge 1$.
\end{exam}

\begin{figure}[htbp]
\centering
\begin{tikzpicture}[scale=1.1, every node/.style={circle, draw, fill=black, inner sep=1.8pt}]

    \begin{scope}[xshift=-3.5cm]
        \node (v2) at (18:1.5) {};
        \node (v3) at (290:1.5) {};
        \node (v4) at (250:1.5) {};
        \node (v5) at (162:1.5) {};
        
        \node (v11) at (0, 1.8) {};
        \node (v12) at (0, 1.0) {};

        \draw (v2) -- (v3) -- (v4) -- (v5);
        
        \draw[thick] (v11) -- (v12);
        
        \draw (v11) -- (v2);
        \draw (v11) -- (v5);
        \draw (v12) -- (v2);
        \draw (v12) -- (v5);

        \node[draw=none, fill=none] at (0,-2.3) {$H_1$};
    \end{scope}

    \begin{scope}[xshift=3.5cm]
        \foreach \i/\ang in {1/90, 2/18, 3/290, 4/250, 5/162} {
            \node (u\i) at (\ang:1.6) {};
        }
        
        \foreach \i/\ang in {1/90, 2/18, 3/290, 4/250, 5/162} {
            \node (w\i) at (\ang:0.9) {};
        }

        \foreach \i/\j in {1/2, 2/3, 3/4, 4/5, 5/1} {
            \draw (u\i) -- (u\j);
            \draw (u\i) -- (w\j);
            \draw (w\i) -- (u\j);
            \draw (w\i) -- (w\j);
        }

        \node[draw=none, fill=none] at (0,-2.3) {$H_2$};
    \end{scope}

\end{tikzpicture}
\caption{The clique blow-up $H_1$ and the $2$-fold independence blow-up $H_2$ of $C_5$.}
\label{fig_c5_blowups}
\end{figure}

The following example illustrates that there exist graphs of diameter $3$ whose powers are all well-covered.

\begin{exam}
Let $G = C_7$, which has diameter $3$. Then $G^d$ is well-covered for all $d \ge 1$. Similarly, if $G = C_5 \square K_2$ (the pentagonal prism), then $\operatorname{diam}(G) = 3$ and $G^d$ is well-covered for all $d \ge 1$.
\end{exam}

By Corollary \ref{cor_twins}, when investigating graphs $G$ such that $G^d$ is well-covered for all $d \ge 1$, we may assume without loss of generality that $G$ has no true twins. We do not know of any examples of such graphs with diameter at least $4$. On the other hand, every well-covered graph of diameter $2$ automatically satisfies this condition. This leads us to pose the following question.

\begin{quest}
Let $G$ be a simple connected graph such that $G^d$ is well-covered for all $d \ge 1$. Is it true that $\operatorname{diam}(G) \le 3$? Furthermore, if $\operatorname{diam}(G) = 3$, are there only finitely many such graphs without true twins?
\end{quest}

\section{Acknowledgment} 
A part of this work was completed during the first authors' visit to the Vietnam Institute for Advanced Study in Mathematics (VIASM). She thanks VIASM for its support and hospitality.

  \section*{Declarations}
  
  
  \subsection*{\bf Ethical approval}\ \\[-0.3cm]
  
  \noindent Not applicable.
  
  
  \subsection*{\bf Competing interests}\ \\[-0.3cm]
  
  \noindent The authors declare that there are no conflicts of interest.
  
  
  \subsection*{\bf Authors' contributions}\ \\[-0.3cm]
  
  \noindent All authors have contributed equally to this work.
  

  
  \subsection*{\bf Availability of data and materials}\ \\[-0.3cm]
  
  \noindent Data sharing is not applicable to this work, as no data sets were generated or analyzed during the current study.


\begin{thebibliography}{99}

\bibitem[BM]{BM} A. Bondy, U. S. R. Murty, \textit{Graph theory}, Graduate Texts in Mathematics {\bf 244}  (2008), Springer.
 
\bibitem[D]{D}
Z. Deniz,
\emph{On the well-coveredness of square graphs},
Comm. Fac. Sci. Univ. Ank. Ser. A1 Math. Stat. {\bf 71} (2022), 490--501.

\bibitem[F]{F} O. Favaron, \textit{Very well covered graphs}, Discrete Mathematics {\bf 42} (1982), 177--187.

\bibitem[FHN]{FHN} A. Finbow, B. Hartnell, R. J. Nowakowski, \emph{A characterization of well covered graphs of girth 5 or greater}, Journal of Combinatorial Theory, Series B, 57 (1) (1993), 44–68,
https://dx.doi.org/10.1006/jctb.1993.1005.

 

\bibitem[P]{P} M. D. Plummer, \emph{Well-covered graphs: a survey}. Quaestiones Mathematicae, 1993 16(3), 253-287.
 

 \bibitem[PTV]{PTV} E. Prisner, J. Topp, and P. D. Vestergaard. \emph{Well covered simplicial, chordal, and circular arc graphs}. Journal of Graph Theory, (1996) {\bf 21(2)}, 113-119.

\bibitem[R]{R} G. Ravindra, \emph{Well-covered graphs}, Journal of Combinatorics and System Sciences, 2 (1) (1977), 20–21.

\end{thebibliography}
\end{document}